\newtheorem{theorem}{Theorem}
\newtheorem{lemma}[theorem]{Lemma}
\newtheorem{proposition}[theorem]{Proposition}
\theoremstyle{definition}
\newtheorem{definition}[theorem]{\textbf{Definition}}
\newtheorem{remark}[theorem]{\textbf{Remark}}
\begin{document}

\date{ }
\title{Calibrated geodesic foliations \\
of the hyperbolic space}
\author{Yamile Godoy and Marcos Salvai~\thanks{%
Partially supported by \textsc{CONICET, FONCyT, SECyT~(UNC)}.} \\
{\small {FaMAF - CIEM}, Ciudad Universitaria, 5000 C\'{o}rdoba, Argentina}\\
ygodoy@famaf.unc.edu.ar, salvai@famaf.unc.edu.ar}

\maketitle

\begin{abstract} 
Let $H$ be the hyperbolic space of dimension $n+1$. A geodesic foliation of $%
H$ is given by a smooth unit vector field on $H$ all of whose integral
curves are geodesics. Each geodesic foliation of $H$ determines an $n$%
-dimensional submanifold $\mathcal{M}$ of the $2n$-dimensional manifold $%
\mathcal{L}$ of all the oriented geodesics of $H$ (up to orientation
preserving reparametrizations). The space $\mathcal{L}$ has a canonical
split semi-Riemannian metric induced by the Killing form of the isometry
group of $H$. Using a split special Lagrangian calibration, we study the
volume maximization problem for a certain class of geometrically
distinguished geodesic foliations, whose corresponding submanifolds of $%
\mathcal{L}$ are space-like.
\end{abstract}

\noindent Mathematics Subject Classification:\ 53C38, 53C12, 53C22, 53C50

\noindent Key words and phrases: split special Lagrangian calibration,
geodesic foliation, hyperbolic space, space of geodesics

\section{Introduction}

Calibrations are a tool to detect submanifolds of minimum volume in a
homology class. They originated in 1982 in the celebrated paper \cite%
{harvey-lawson} by Harvey-Lawson. Dadok and Morgan \cite{dadok,morgan}
obtained new important examples in 1983. Calibrations provided significant
achievements in volume minimization of submanifolds of the Euclidean space,
and they were also very useful in non-Euclidean settings, for instance in 
\cite{morgan-ziller,gluck-morgan-mackenzie,salvai01}. In 1989 Mealy \cite%
{Mealy} introduced calibrations on semi-Riemannian manifolds. The split
special Lagrangian calibrations were rediscovered by Warren \cite{Warren},
who applied them to the volume maximization problem of the special
Lagrangian submanifolds of a semi-Euclidean space (see also \cite{HarveyA}).
They also play a central role in the study of optimal
transportation \cite{warren-ot}.

In this article we will use calibrations in connection with geodesic
foliations. In this direction, Gluck and Ziller \cite{Gluck-Ziller}
calibrated the Hopf vector fields on the three sphere, which determine the
Hopf fibrations, that is, the canonical geodesic foliations of the three
sphere. We will deal with the hyperbolic case, but instead of calibrating
the vector field determining the foliation we will calibrate the foliation
itself thought of as the space of its leaves. Since the geodesics of the
hyperbolic space may be identify with Lorentzian planes in a Minkowski
space, the subject of this note is related also with the articles \cite%
{morgan-ziller,gluck-morgan-mackenzie}, dealing with the volume of
submanifolds of Grassmannians. In Section \ref{SCali} we recall basic
properties of split special Lagrangian calibrations. The fact that our
submanifolds are non-compact makes the definition of volume maximization
somehow involved.

Let $H$ be the hyperbolic space of dimension $n+1$. A geodesic foliation of $%
H$ is given be a smooth unit vector field on $H$ all of whose integral
curves are geodesics. Let $\mathcal{L}$ be the space of all the oriented
geodesics of $H$ (up to orientation preserving reparametrizations), which is
a $2n$ dimensional manifold and has a canonical split semi-Riemannian metric
induced by the Killing form of the isometry group of $H$. A geodesic
foliation of $H$ may be identified with an $n$ dimensional submanifold of $%
\mathcal{L}$. We consider a geometrically distinguished class of geodesic
foliations of $H$, which we call t.e.r.\ geodesic foliations, whose
corresponding submanifolds of $\mathcal{L}$ are space-like. This can be
found in Section \ref{Sfol}.

Now we comment on the contents of the last section. In Theorem \ref{Teo2} we
prove that the geodesic foliation $\mathcal{F}_{o}$ orthogonal to a totally
geodesic hypersurface of $H$ is volume maximizing among all t.e.r.\ geodesic
foliations. This follows from Theorem \ref{Teo1}, where the associated
submanifold $\mathcal{M}_{o}$ of $\mathcal{L}$ is proved to be homologically
volume maximizing in an open submanifold of $\mathcal{L}$, using a split
special Lagrangian calibration.

\section{Calibrations\label{SCali}}

Let $N$ be a semi-Riemannian manifold. We will say that an $m$-vector $\xi $
in $T_{q}N$ is space-like if the subspace generated by $\xi $ is space-like
(we are not considering the induced indefinite inner product on $\Lambda
^{m}\left( T_{q}N\right) $). 

We will deal only with split semi-Riemannian manifolds. So, the definitions
bellow are given in this case. Specifically, $N$ will be a semi-Riemannian
manifold of dimension $2m$ with signature $\left( m,m\right) $.

\begin{definition}
Let $N$ be a split semi-Riemannian manifold of dimension $2m$. A closed $m$%
-form $\psi $ on $N$ is called a \emph{calibration} if $\psi _{q}\left( \xi
\right) \geq $ vol$_{m}\left( \xi \right) $ for any oriented space-like $m$%
-vector $\xi $ in $T_{q}N$ with $\psi _{q}(\xi )>0$, for all $q\in N$.

Let $\psi $ be a calibration on $N$ and let $M$ be an oriented space-like
submanifold of $N$ of dimension $m$. Then $M$ is said to be \emph{calibrated
by }$\psi $ if $\psi _{q}\left( \xi \right) =$ vol$_{m}\left( \xi
\right) $ for any $m$-vector $\xi $ generating $T_{q}M$ with $\psi _{q}(\xi
)>0$, for all $q\in M$.
\end{definition}

\begin{definition}
\label{hvm} a) Let $M$ be an oriented $m$-dimensional space-like submanifold of $%
N$. One says that $M$ is \emph{volume maximizing in }$N$ if for any open
subset $U$ of $M$ with compact closure and smooth border $\partial U$, one
has that 
\begin{equation}
\text{vol}_{m}\left( U\right) \geq \text{vol}_{m}\left( V\right)
\label{volUvolV}
\end{equation}%
for any space-like submanifold $V$ of $N$ of dimension $m$ with compact
closure and $\partial V=\partial U$. 

b) Moreover, $M$ is said to be \emph{homologically volume maximizing in }$N$ 
if in addition $V$ is required to be homologous to $U$.

In this context, uniqueness of $M$ is understood as follows: equality holds
in (\ref{volUvolV}) only if $U=V$.
\end{definition}

We recall the fundamental theorem of calibrations introduced by Mealy, which
is analogous to the corresponding result for Riemannian manifolds in \cite%
{harvey-lawson}.

\begin{theorem}
\label{tfc}\emph{\cite{Mealy}} Let $N$ be a split semi-Riemannian manifold
of dimension $2m$ and let $\psi $ be a calibration on $N$. If $M$ is an
oriented space-like submanifold of $N$ of dimension $m$ which is calibrated
by $\psi $, then $M$ is homologically volume maximizing in $N$.
\end{theorem}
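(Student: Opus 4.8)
The plan is to reproduce the classical argument of Harvey--Lawson \cite{harvey-lawson}, with the comass inequality replaced by the reversed inequality available in the split setting, and with close attention to signs. Since $M$ is calibrated, I would first record the consequence that, with its orientation chosen so that the generating unit $m$-vector $\xi$ of $T_qM$ satisfies $\psi_q(\xi)>0$, one has $\psi_q(\xi)=\mathrm{vol}_m(\xi)$ at every $q\in M$; that is, $\psi$ restricts on $M$ to its Riemannian volume form. Consequently, for any open $U\subseteq M$ with compact closure and smooth boundary,
\[
\int_U\psi=\mathrm{vol}_m(U).
\]

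Next I would bring in a space-like competitor $V$ with compact closure, $\partial V=\partial U$ and $V$ homologous to $U$, oriented compatibly with $U$. By the definition of homologous there is an $(m+1)$-chain $W$ with $\partial W=U-V$, and since $\psi$ is closed, Stokes' theorem gives
\[
\int_U\psi-\int_V\psi=\int_{\partial W}\psi=\int_W d\psi=0,
\]
so $\int_U\psi=\int_V\psi$. It therefore suffices to establish the pointwise estimate $\int_V\psi\ge\mathrm{vol}_m(V)$, for then
\[
\mathrm{vol}_m(U)=\int_U\psi=\int_V\psi\ge\mathrm{vol}_m(V)
\]
is exactly the inequality (\ref{volUvolV}) defining homological volume maximization, the non-compactness of $M$ being absorbed into the localized formulation of Definition \ref{hvm}.

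The remaining, and principal, step is the pointwise bound $\psi_q(\xi)\ge\mathrm{vol}_m(\xi)$ for the oriented unit tangent $m$-vector $\xi$ of $V$ at $q$. Wherever $\psi_q(\xi)>0$ this is precisely the calibration inequality, so integration over those points is immediate. The delicate issue — which does not arise in the Riemannian case, where the comass bound holds for every oriented unit tangent regardless of sign — is to control the set of points of $V$ at which $\psi_q(\xi)\le 0$, since there the desired inequality would fail. The hard part will be this sign control. My plan is to show that $\psi_q(\xi)$ cannot vanish along such a competitor (the calibration comparison applied to $\pm\xi$ forces $\lvert\psi_q(\xi)\rvert\ge\mathrm{vol}_m(\xi)>0$ on the relevant space-like planes) and then, using the continuity of the tangent $m$-planes along the connected space-like submanifold together with the fact that $V$ shares its boundary orientation with the calibrated $M$, to conclude that the sign of $\psi_q(\xi)$ is constant along $V$ and equal to the positive sign carried by $M$. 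This makes the calibration inequality applicable at every point of $V$, yielding $\int_V\psi\ge\mathrm{vol}_m(V)$ and completing the proof; tracking equality in this pointwise bound then shows that any volume-equal competitor is itself calibrated.
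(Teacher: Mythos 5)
First, a point of comparison: the paper offers no proof of Theorem \ref{tfc} at all --- it is recalled verbatim from \cite{Mealy} --- so your attempt has to stand on its own merits rather than be matched against an argument in the text. Your skeleton is the right one (calibrated $\Rightarrow \int_U\psi=\mathrm{vol}_m(U)$; closedness plus Stokes $\Rightarrow \int_U\psi=\int_V\psi$; then a pointwise inequality along $V$), and you correctly identify the sign control as the crux where the split case departs from \cite{harvey-lawson}. But exactly there your plan has a genuine gap. The parenthetical claim that ``the calibration comparison applied to $\pm\xi$ forces $\lvert\psi_q(\xi)\rvert\ge\mathrm{vol}_m(\xi)>0$'' is false as stated: the defining inequality of a calibration is conditioned on $\psi_q(\xi)>0$, so if $\psi_q(\xi)=0$ then neither $\xi$ nor $-\xi$ meets the hypothesis and the definition yields no information at all. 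Indeed $\psi_q=0$ is an admissible pointwise value of a calibration under the paper's definition, so a calibration may a priori degenerate at points of $N$ off $M$ --- and that is precisely what you must exclude, so your argument assumes what it needs to prove. The correct route is a two-level connectedness argument: (i) fiberwise, the set of oriented space-like $m$-planes in a split scalar product space of signature $(m,m)$ has exactly two connected components interchanged by orientation reversal (itself a fact to prove, e.g.\ by writing space-like planes as graphs of contractions over a definite subspace, a convex set); on each component the continuous function $\psi_q/\mathrm{vol}_m$ has connected range avoiding both $(0,1)$ (calibration inequality) and $(-1,0)$ (the $\pm\xi$ trick), hence is everywhere $\ge 1$, everywhere $\le -1$, or identically $0$; (ii) globally, the degenerate set and the set where $\lvert\psi_q\rvert\ge\mathrm{vol}_m$ on all space-like planes are disjoint, closed, and cover $N$, so connectedness of $N$ together with positivity at points of the nonempty calibrated $M$ rules out degeneracy everywhere.

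The second gap is in pinning the sign to be positive. Non-vanishing plus continuity gives constancy of the sign of $\psi_q(\xi)$ only along each \emph{connected component} of $V$, whereas Definition \ref{hvm} allows disconnected competitors, including components disjoint from $\partial U$; for these, ``sharing the boundary orientation with $M$'' says nothing. One needs additional arguments: closed components, if null-homologous, would satisfy $\int\psi=0$ while the established bound forces $\bigl\lvert\int\psi\bigr\rvert\ge\mathrm{vol}_m>0$, so they cannot occur; and for components carrying part of the boundary one needs a degree-type argument (in the flat model: every space-like $n$-plane projects isomorphically onto a null coordinate factor $\mathbb{R}^n$, and the winding of the common boundary forces the projection to be orientation-preserving, i.e.\ forces the positive component). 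Without these steps the inequality $\int_V\psi\ge\mathrm{vol}_m(V)$ is not established, and a negatively-signed or degenerate component would break the chain $\mathrm{vol}_m(U)=\int_V\psi\ge\mathrm{vol}_m(V)$. These missing pieces are not routine bookkeeping; they are the actual content of Mealy's proof.
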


Next we introduce the split special Lagrangian calibration on a split
Euclidean space, which appeared first in the work of Mealy \cite{Mealy}. We
consider the presentation of this calibration in null coordinates given in 
\cite{Warren}.

\begin{proposition}
\label{warren}\emph{\cite{Warren}} Let $\mathbb{R}^{n,n}$ be the product $\mathbb{R}%
^{n}\times \mathbb{R}^{n}$ endowed with the split inner product whose
associated square norm is $\left\Vert \left( x,y\right) \right\Vert
=\left\langle x,y\right\rangle $, where $\left\langle .,.\right\rangle $ is
the canonical inner product on $\mathbb{R}^{n}$. For any $c>0$, the $n$-form%
\begin{equation*}
\phi _{c}=\tfrac{1}{2}\left( ce^{1}\wedge \dots \wedge e^{n}+\tfrac{1}{c}%
f^{1}\wedge \dots \wedge f^{n}\right) 
\end{equation*}%
is a calibration on $\mathbb{R}^{n,n}$ \emph{(}here $\left\{ e^{i}\mid
i=1,\dots ,n\right\} $ and $\left\{ f^{j}\mid j=1,\dots ,n\right\} $ are the
dual of the canonical bases of $\mathbb{R}^{n}\times \left\{ 0\right\} $ and 
$\left\{ 0\right\} \times \mathbb{R}^{n}$, respectively\emph{)}. Moreover, a
space-like $n$-vector $\xi $ is calibrated by $\phi _{c}$ if and only if $%
\xi $ is special Lagrangian, that is, if%
\begin{equation*}
f^{1}\wedge \dots \wedge f^{n}\left( \xi \right) =c^{2}e^{1}\wedge \dots
\wedge e^{n}\left( \xi \right) \text{.}
\end{equation*}
\end{proposition}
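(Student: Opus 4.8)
The plan is to reduce the whole statement to a pointwise problem in linear algebra. Since $\phi_{c}$ has constant coefficients in the basis $\{e^{i},f^{j}\}$, it is automatically closed, so the entire content is the comass-type inequality $\phi_{c}(\xi)\geq \mathrm{vol}_{n}(\xi)$ together with its equality case. As both sides are positively homogeneous of degree one in $\xi$, it suffices to treat a decomposable space-like $n$-vector. Writing $E=\mathbb{R}^{n}\times\{0\}$ and $F=\{0\}\times\mathbb{R}^{n}$ and polarizing $\Vert(x,y)\Vert=\langle x,y\rangle$ gives $B((x,y),(x',y'))=\tfrac12(\langle x,y'\rangle+\langle x',y\rangle)$, so $E$ and $F$ are complementary null subspaces dual to each other. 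First I would observe that if $\xi$ generates a space-like $n$-plane $P$, then the projection $P\to E$ is an isomorphism: its kernel lies in $F\cap P$, and any vector there is null, hence zero because $B|_{P}$ is positive definite. Thus $P$ is the graph $\{(x,Ax):x\in\mathbb{R}^{n}\}$ of a linear map $A$, and writing $S=\tfrac12(A+A^{T})$ for its symmetric part, $P$ is space-like exactly when $S>0$.

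Next I would compute the three relevant quantities on the adapted basis $v_{i}=(e_{i},Ae_{i})$, with $\xi=v_{1}\wedge\dots\wedge v_{n}$. Since $e^{i}(v_{j})=\delta_{ij}$ and $f^{i}(v_{j})=A_{ij}$, one gets $e^{1}\wedge\dots\wedge e^{n}(\xi)=1$ and $f^{1}\wedge\dots\wedge f^{n}(\xi)=\det A$, hence $\phi_{c}(\xi)=\tfrac12(c+\tfrac1c\det A)$; moreover the Gram matrix $B(v_{i},v_{j})$ equals $S$, so $\mathrm{vol}_{n}(\xi)=\sqrt{\det S}$. Choosing the opposite orientation would negate both terms and violate $\phi_{c}(\xi)>0$, so the positivity hypothesis merely fixes the orientation. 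The proposition is then equivalent to the scalar inequality $\tfrac12(c+\tfrac1c\det A)\geq\sqrt{\det S}$ for every $A$ with $S>0$ and every $c>0$.

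The key step, which I expect to be the main obstacle, is the matrix inequality $\det A\geq\det S$ for $S>0$. I would prove it by the similarity transformation $\det A=\det(S+K)=\det S\cdot\det(I+\tilde K)$, where $K=\tfrac12(A-A^{T})$ and $\tilde K=S^{-1/2}KS^{-1/2}$ is again antisymmetric; since the eigenvalues of a real antisymmetric matrix are purely imaginary and occur in pairs $\pm i\mu$ (with possible zeros when $n$ is odd), one has $\det(I+\tilde K)=\prod(1+\mu^{2})\geq1$, with equality precisely when $\tilde K=0$, i.e.\ when $A$ is symmetric. In particular $\det A\geq\det S>0$, so $\phi_{c}(\xi)>0$ is automatic. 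Combining this with the arithmetic--geometric mean inequality yields
\[
\phi_{c}(\xi)=\tfrac12\Big(c+\tfrac1c\det A\Big)\geq\tfrac12\Big(c+\tfrac1c\det S\Big)\geq\sqrt{\det S}=\mathrm{vol}_{n}(\xi),
\]
which is the calibration inequality.

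Finally, I would read off the equality case. The first inequality above is an equality iff $A$ is symmetric (the Lagrangian condition), and the arithmetic--geometric mean step is an equality iff $c^{2}=\det S$; under symmetry $\det S=\det A$, so together the conditions become $A=A^{T}$ and $\det A=c^{2}$. Translating back through $f^{1}\wedge\dots\wedge f^{n}(\xi)=\det A$ and $e^{1}\wedge\dots\wedge e^{n}(\xi)=1$, this is exactly the special Lagrangian condition $f^{1}\wedge\dots\wedge f^{n}(\xi)=c^{2}\,e^{1}\wedge\dots\wedge e^{n}(\xi)$, the symmetry of $A$ being the Lagrangian part of ``special Lagrangian'', which completes the proof.
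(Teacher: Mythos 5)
Your proof is correct, but note that there is no internal proof to compare it against: the paper states Proposition \ref{warren} as a quotation from \cite{Warren} (the calibration itself going back to \cite{Mealy}) and gives no argument. Your proposal therefore supplies a proof the authors delegate to the literature, and it follows what is essentially the standard (and essentially Warren's) line: closedness is immediate from constant coefficients; a space-like $n$-plane meets the null subspace $\{0\}\times\mathbb{R}^{n}$ trivially, hence is a graph $\{(x,Ax)\}$; the induced Gram matrix is the symmetric part $S=\tfrac12(A+A^{T})$, so $\mathrm{vol}_{n}(\xi)=\sqrt{\det S}$ while $\phi_{c}(\xi)=\tfrac12\bigl(c+\tfrac1c\det A\bigr)$; the calibration inequality then follows from the key determinant inequality $\det(S+K)\geq\det S$ for $S>0$ and $K$ antisymmetric (your proof of it, conjugating by $S^{-1/2}$ and using the purely imaginary, paired spectrum of antisymmetric matrices, is correct), combined with the arithmetic--geometric mean inequality.

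One point deserves emphasis, and you handled it correctly where the statement's wording is loose. Your equality analysis shows that $\xi$ is calibrated if and only if $A$ is symmetric \emph{and} $\det A=c^{2}$. The displayed condition in the proposition, $f^{1}\wedge\dots\wedge f^{n}(\xi)=c^{2}\,e^{1}\wedge\dots\wedge e^{n}(\xi)$, translates only to $\det A=c^{2}$, and by itself it is \emph{not} sufficient for equality: for $n=2$ take $S=\epsilon I$ and $K=\begin{pmatrix}0&k\\-k&0\end{pmatrix}$ with $\epsilon^{2}+k^{2}=c^{2}$ and $k\neq0$; then $\det A=c^{2}$, so the displayed condition holds, yet $\mathrm{vol}_{2}(\xi)=\epsilon<c=\phi_{c}(\xi)$. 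Hence the ``that is'' clause must be read as including the Lagrangian condition (symmetry of $A$, i.e.\ vanishing of $\sum_{i}e^{i}\wedge f^{i}$ on the plane), exactly as you observe at the end; your argument in fact yields the precise form of the equality case, which is a small sharpening of the statement as printed.
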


\section{Geodesic foliations of the hyperbolic space\label{Sfol}}

\subsection{The space of oriented geodesics of $H$}

Let $H$ be the hyperbolic space of constant sectional curvature $-1$ and
dimension $n+1$. We recall from \cite{Salvai-2007} (see also \cite{Georgiou-2010}) some facts about
the geometry of the space $\mathcal{L}$ of all complete oriented geodesics
of $H$ (up to orientation preserving reparametrizations). It admits a unique
differentiable structure of dimension $2n$ such that the canonical
projection $\pi :T^{1}H\rightarrow \mathcal{L}$ is a smooth
submersion.

Let $\gamma :\mathbb{R}\rightarrow H$ be a unit speed geodesic and let $%
\mathcal{J}_{\gamma }$ be the space of all Jacobi vector fields along $%
\gamma $ which are orthogonal to $\dot{\gamma}$. There exists a well-defined
canonical isomorphism 
\begin{equation}
T_{\gamma }:\mathcal{J}_{\gamma }\rightarrow T_{[\gamma ]}\mathcal{\mathcal{L%
}}\text{,}\hspace{1cm}T_{\gamma }(J)=\left. {\frac{d}{dt}}\right\vert
_{0}[\gamma _{t}]\text{,}  \label{isoT}
\end{equation}%
where $\gamma _{t}$ is any variation of $\gamma $ by unit speed geodesics
associated with $J$.

Given a tangent vector $X$ to a semi-Riemannian manifold, we denote $%
\left\Vert X\right\Vert =\left\langle X,X\right\rangle $ and $\left\vert
X\right\vert =\sqrt{\left\vert \left\langle X,X\right\rangle \right\vert }$.
Also, given $v\in TH$, we denote by $\gamma _{v}$ the unique geodesic in $H$
with initial velocity $v$.

Let $G$ be the identity component of the isometry group of $H$. The manifold 
$\mathcal{L}$ is a homogeneous space of $G$ and the Killing form induces on
it a semi-Riemannian metric $g$ of signature $\left( n,n\right) $. In terms
of the isomorphism (\ref{isoT}) the square norm of this metric may be
written as follows: For $J\in \mathcal{J}_{\gamma }$, 
\begin{equation}
\Vert T_{\gamma }(J)\Vert =\left\vert J\right\vert ^{2}-\left\vert J^{\prime
}\right\vert ^{2}\text{,}  \label{n}
\end{equation}%
where $J^{\prime }$ denotes the covariant derivative of $J$ along $\gamma $
(the right hand side in the expression is a constant function, so the left hand side is well
defined). For the geometrical meaning of a curve in $\left( \mathcal{L}%
,g\right) $ being space- or time-like see Proposition 4 in \cite{Salvai-2007}.

It will be convenient for us to have another presentation of $\left( \mathcal{%
L},g\right) $. If one considers the unit ball model of $H$, given an
oriented geodesic $\gamma $ of $H$, then $\gamma \left( \infty \right) $ and 
$\gamma \left( -\infty \right) $ in $S^{n}$ are well defined. Let $\Delta
_{n}=\left\{ \left( p,p\right) \mid p\in S^{n}\right\} $ denote the diagonal
in $S^{n}\times S^{n}$. The map 
\begin{equation}
F:\mathcal{L}\rightarrow \left( S^{n}\times S^{n}\right) -\Delta _{n}\text{%
,\ \ \ \ \ }F\left( \left[ \gamma \right] \right) =\left( \gamma \left(
-\infty \right) ,\gamma \left( \infty \right) \right)   \label{isoF}
\end{equation}%
is a well-defined diffeomorphism.

Given distinct points $p,q\in S^{n}$, let $T_{p,q}$ denote the reflection on 
$\mathbb{R}^{n+1}$ with respect to the hyperplane orthogonal to $p-q$. Let
us consider on $\left( S^{n}\times S^{n}\right) -\Delta _{n}$ the
semi-Riemannian metric whose associated norm is 
\begin{equation}
\left\Vert \left( x,y\right) \right\Vert _{\left( p,q\right) }=4\left\langle
T_{p,q}x,y\right\rangle /\left\vert q-p\right\vert ^{2}  \label{mss}
\end{equation}%
for $x\in p^{\bot }=T_{p}S^{n}$, $y\in q^{\bot }=T_{q}S^{n}$. Then, if $\mathcal{L}$ is endowed
with the metric $g$, the diffeomorphism $F$ given in (\ref{isoF}) is an
isometry.

\subsection{Geodesic foliations}

A smooth geodesic foliation of $H$ is given by a smooth unit vector field $V$
on $H$ all of whose integral curves, the leaves, are geodesics. In \cite{Godoy-Salvai14} we
studied geodesic foliations of the three dimensional hyperbolic space. The
set $\mathcal{F}$ of all the leaves admits a canonical differentiable
structure such that the canonical projection $H\rightarrow \mathcal{F}$ is a
smooth submersion and is naturally embedded in $\mathcal{L}$. We will denote
by $\mathcal{M}$ the associated submanifold of $\mathcal{L}$, of dimension $n
$. The space of oriented geodesics of the \emph{three dimensional}
hyperbolic space admits another canonical split semi-Riemannian metric,
apart from that induced by the Killing form. We characterized geometric
properties of a geodesic foliation $\mathcal{F}$ in terms of the
nondegeneracy of the induced metrics on the corresponding submanifold $%
\mathcal{M}$ of $\mathcal{L}$.

Now, we consider a new type of geodesic foliations of $H$, for any dimension. Their informal
geometrical meaning is that the \emph{translational} motion of the leaves 
\emph{exceeds} their \emph{rotational} motion (a curve of geodesics in the
foliation may be thought of as a motion in $H$ of the initial geodesic), and
so we call them t.e.r.\ geodesic foliations.

\begin{definition}
A smooth geodesic foliation $\mathcal{F}$ determined by the unit vector
field $V$ on $H$ is a \emph{t.e.r.\ geodesic foliation} if $\left\vert
\nabla _{v}V\right\vert <\left\vert v\right\vert $, for any nonzero tangent
vector $v$ of $H$.
\end{definition}

Standard examples of t.e.r.\ geodesic foliations are the foliations which
are orthogonal to totally geodesic submanifolds of codimension one of $H$.

\begin{proposition}
\label{rlt}Let $\mathcal{F}$ be a smooth geodesic foliation of $H$. Then $%
\mathcal{F}$ is a t.e.r.\ geodesic foliation if and only if the associated
submanifold $\mathcal{M}$ of $\left( \mathcal{L},g\right) $ is space-like.
\end{proposition}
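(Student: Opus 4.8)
The plan is to describe the tangent space $T_{[\gamma]}\mathcal{M}$ explicitly in terms of Jacobi fields and then read off the sign of the induced metric from formula (\ref{n}). Fix a leaf $\gamma=\gamma_{V(p)}$ through $p\in H$. The projection $\sigma:H\rightarrow \mathcal{L}$, $\sigma(q)=[\gamma_{V(q)}]$, has image $\mathcal{M}$, and its differential at $p$ kills the leaf direction $V(p)$, since moving along a leaf only reparametrizes the geodesic. For $v\in V(p)^{\bot}$ take a curve $c$ in $H$ with $c(0)=p$ and $\dot{c}(0)=v$; the associated variation $\gamma_{s}=\gamma_{V(c(s))}$ by unit speed geodesics has variation field the Jacobi field $J_{v}$ along $\gamma$ with initial conditions $J_{v}(0)=v$ and $J_{v}^{\prime}(0)=\nabla_{v}V$. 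Since $V$ is unit, $\nabla_{v}V\bot V(p)$, so both $J_{v}(0)$ and $J_{v}^{\prime}(0)$ are orthogonal to $\dot{\gamma}(0)$, whence $J_{v}\in\mathcal{J}_{\gamma}$ and $d\sigma_{p}(v)=T_{\gamma}(J_{v})$ by (\ref{isoT}). As $d\sigma_{p}$ is injective on $V(p)^{\bot}$ with $n$-dimensional image, the assignment $v\mapsto T_{\gamma}(J_{v})$ is a linear isomorphism from $V(p)^{\bot}$ onto $T_{[\gamma]}\mathcal{M}$.

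Next I would evaluate the square norm. By (\ref{n}) the function $|J_{v}|^{2}-|J_{v}^{\prime}|^{2}$ is constant along $\gamma$, so evaluating at $t=0$ yields
\[
\Vert T_{\gamma}(J_{v})\Vert = |J_{v}(0)|^{2}-|J_{v}^{\prime}(0)|^{2} = |v|^{2}-|\nabla_{v}V|^{2}
\]
for every $v\in V(p)^{\bot}$. Consequently $\mathcal{M}$ is space-like at $[\gamma]$ if and only if $|\nabla_{v}V|<|v|$ for all nonzero $v\in V(p)^{\bot}$. This immediately gives one implication: a t.e.r.\ foliation satisfies $|\nabla_{v}V|<|v|$ for every nonzero $v$, in particular for $v\in V(p)^{\bot}$, so $\mathcal{M}$ is space-like.

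For the converse I would upgrade the inequality from $V(p)^{\bot}$ to all of $T_{p}H$ using that the leaves are geodesics. Since the integral curves of $V$ are geodesics, $\nabla_{V}V=0$, so for the orthogonal decomposition $v=\langle v,V(p)\rangle V(p)+v^{\bot}$ one has $\nabla_{v}V=\nabla_{v^{\bot}}V$. Therefore, assuming $\mathcal{M}$ space-like, for any nonzero $v$ with $v^{\bot}\neq 0$ we get $|\nabla_{v}V|=|\nabla_{v^{\bot}}V|<|v^{\bot}|\leq|v|$, while if $v^{\bot}=0$ then $\nabla_{v}V=0<|v|$. In either case $|\nabla_{v}V|<|v|$, so $\mathcal{F}$ is t.e.r.

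The main point to get right is the identification of $T_{[\gamma]}\mathcal{M}$ with $\{T_{\gamma}(J_{v})\mid v\in V(p)^{\bot}\}$, namely verifying that the variation field is indeed the Jacobi field with the stated initial data and that it lies in $\mathcal{J}_{\gamma}$; everything else then follows formally from (\ref{n}) and the geodesic condition $\nabla_{V}V=0$. The only subtlety beyond this is that the foliation structure directly controls only directions orthogonal to the leaf, so the extension argument in the last paragraph is essential to recover the full t.e.r.\ inequality.
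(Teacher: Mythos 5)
Your proof is correct and takes essentially the same route as the paper's: both identify tangent vectors of $\mathcal{M}$ with the Jacobi fields $J_{v}$ coming from variations $\gamma_{V(c(t))}$, compute $\Vert T_{\gamma}(J_{v})\Vert=|v|^{2}-|\nabla_{v}V|^{2}$ via (\ref{n}), and invoke $\nabla_{V}V=0$ to reduce the t.e.r.\ inequality to directions orthogonal to $V$. Your write-up merely makes explicit some steps the paper compresses (orthogonality of $J_{v}$ to $\dot{\gamma}$, the dimension count identifying $T_{[\gamma]}\mathcal{M}$, and the decomposition $v=\langle v,V\rangle V+v^{\bot}$ in the converse), which is exactly what the paper's opening remark ``we may suppose that $v$ is orthogonal to $V$'' encapsulates.
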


\begin{proof} Since $\nabla _{V}V=0$, we may suppose that $v$ is
orthogonal to $V$. Given a submanifold $\mathcal{M}$ of $\mathcal{L}$ and $%
[\gamma ]\in \mathcal{M}$, any nonzero tangent vector $X$ in $T_{[\gamma ]}%
\mathcal{M}$ corresponds via $T_{\gamma }$ (\ref{isoT}) to a Jacobi vector
field in $\mathcal{J}_{\gamma }$ associated with a variation of $\gamma $ by
unit speed geodesics whose equivalence classes are in $\mathcal{M}$ (see
\cite{Godoy-Salvai14}). That is, there exists a smooth curve $c:(-\varepsilon ,\varepsilon
)\rightarrow H$ with $c^{\prime }(0)\neq 0$ (since $X\neq 0$) such that $%
X=T_{\gamma }(J)$, where $J(s)=\left. \frac{d}{dt}\right\vert _{0}\gamma
_{V(c(t))}\left( s\right) $. Now, since $J^{\prime }(0)=\nabla _{J(0)}V$, we
have by (\ref{n}) that%
\begin{equation*}
\Vert X\Vert =|J(0)|^{2}-|J^{\prime }(0)|^{2}=|c^{\prime }(0)|^{2}-|\nabla
_{J(0)}V|^{2}\text{.}
\end{equation*}%
Therefore, the equivalence holds. 
\end{proof}


Let $\varphi _{\pm }:\mathcal{L}\rightarrow S^{n}$ be the forward (for +)
and backward (for $-$) Gauss maps, that is, $\varphi _{\pm }([\gamma
])=\gamma (\pm \infty )$.

\begin{proposition}
\label{rltdifeo}If $\mathcal{F}$ is a t.e.r.\ geodesic foliation of $H$ and $%
\mathcal{M}$ is the associated submanifold of $\mathcal{L}$, then $\varphi
_{\pm }:\mathcal{M}\rightarrow S^{n}$ are local diffeomorphisms.
\end{proposition}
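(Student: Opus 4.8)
The plan is to reduce the statement to a linear-algebraic fact about Jacobi fields, exploiting the space-likeness of $\mathcal{M}$ provided by Proposition \ref{rlt}. Since $\dim \mathcal{M}=n=\dim S^{n}$, it suffices to prove that the differential of $\varphi _{\pm }|_{\mathcal{M}}$ is injective at every $[\gamma ]\in \mathcal{M}$; that is, that $T_{[\gamma ]}\mathcal{M}\cap \ker (d\varphi _{\pm })_{[\gamma ]}=\{0\}$.

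First I would describe $\ker (d\varphi _{+})_{[\gamma ]}$ on all of $\mathcal{L}$. Fix a parallel orthonormal frame $e_{1},\dots ,e_{n}$ along $\gamma $ orthogonal to $\dot{\gamma}$. Because $H$ has constant curvature $-1$, every Jacobi field $J\in \mathcal{J}_{\gamma }$ solves $J^{\prime \prime }=J$, hence $J(s)=\sum_{i}(a_{i}e^{s}+b_{i}e^{-s})\,e_{i}(s)$. The \emph{stable} fields, those with $a_{i}=0$ for all $i$ (so $J(s)=\sum_{i}b_{i}e^{-s}e_{i}(s)$), correspond to variations of $\gamma $ through geodesics that remain asymptotic at $+\infty $, hence keep $\gamma (\infty )$ fixed; thus $T_{\gamma }(J)\in \ker (d\varphi _{+})_{[\gamma ]}$. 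Since $\varphi _{+}:\mathcal{L}\rightarrow S^{n}$ is a submersion with $n$-dimensional fibers (a fiber is the set of geodesics through a fixed point at infinity, parametrized by the other endpoint in $S^{n}$), the kernel $\ker (d\varphi _{+})_{[\gamma ]}$ is $n$-dimensional, so it coincides with the $n$-dimensional space of stable fields. Symmetrically, $\ker (d\varphi _{-})_{[\gamma ]}$ is the space of \emph{unstable} fields, those with $b_{i}=0$ for all $i$.

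The decisive computation is then immediate. For a stable field $J(s)=\sum_{i}b_{i}e^{-s}e_{i}(s)$ one has $J^{\prime }(0)=-J(0)$, whence by (\ref{n})
\[
\Vert T_{\gamma }(J)\Vert =|J(0)|^{2}-|J^{\prime }(0)|^{2}=0,
\]
and likewise $J^{\prime }(0)=J(0)$ for an unstable field gives $\Vert T_{\gamma }(J)\Vert =0$. Now suppose $X=T_{\gamma }(J)\in T_{[\gamma ]}\mathcal{M}\cap \ker (d\varphi _{\pm })_{[\gamma ]}$. Since $\mathcal{M}$ is space-like by Proposition \ref{rlt}, every nonzero tangent vector to $\mathcal{M}$ has strictly positive norm; but $X$ lies in such a kernel and therefore has $\Vert X\Vert =0$, forcing $X=0$ and hence $J=0$ (as $T_{\gamma }$ is an isomorphism). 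This proves injectivity of the differential, and by the dimension count $\varphi _{\pm }|_{\mathcal{M}}$ is a local diffeomorphism.

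The main obstacle I anticipate is the rigorous identification in the second step of $\ker (d\varphi _{+})_{[\gamma ]}$ with the stable Jacobi fields, i.e.\ relating a geodesic variation to the motion of its endpoint on the ideal boundary $S^{n}$. The cleanest route is to work through the diffeomorphism $F$ of (\ref{isoF}), under which $\varphi _{+}=\mathrm{pr}_{2}\circ F$, and to verify directly in the ball model that the decaying ($e^{-s}$) component of $J$ governs the infinitesimal displacement of $\gamma (\infty )$, the growing component contributing nothing; the dimension count then upgrades this inclusion to an equality and avoids a full asymptotic analysis.
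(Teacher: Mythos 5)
Your proposal is correct and takes essentially the same route as the paper: the paper's proof is a pointer to Lemma 4.5 and the converse of Theorem 4.3\,(a) of \cite{Godoy-Salvai14}, whose content is precisely your key step, namely that $\ker (d\varphi _{\pm })_{[\gamma ]}$ corresponds under $T_{\gamma }$ to the Jacobi fields with $J^{\prime }=\mp J$ (your stable/unstable fields), which are null by (\ref{n}), so that space-likeness of $\mathcal{M}$ (Proposition \ref{rlt}) forces $T_{[\gamma ]}\mathcal{M}\cap \ker (d\varphi _{\pm })_{[\gamma ]}=\{0\}$ and the dimension count finishes the argument. The step you flag as the main obstacle is exactly what that cited lemma supplies, and your proposed verification (working through $F$ in the ball model, or dimension count after the easy inclusion) is a sound way to make it self-contained.
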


The proof is similar to the proof of the converse in Theorem 4.3 (a) in \cite{Godoy-Salvai14},
where Lemma 4.5 in that paper is used (which holds for higher dimensions).

\section{Calibrated geodesic foliations}

In this section we state and prove the main results. 

\begin{proposition}
\label{F0sl}Let $S$ be a complete totally geodesic submanifold of $H$ of codimension one and let $V
$ be a unit vector field normal to $S$. Then 
\begin{equation*}
\phi :S\rightarrow \mathcal{L},\hspace{0.5cm}\ \ \ \ \phi \left( q\right)
=[\gamma _{V\left( q\right) }]\text{,}
\end{equation*}%
is a space-like submanifold of $(\mathcal{L},g)$.
\end{proposition}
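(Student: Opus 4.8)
The plan is to compute the metric induced by $g$ on the image of $\phi$ directly, by means of the norm formula (\ref{n}), in the same spirit as the proof of Proposition \ref{rlt}. Fix $q\in S$ together with a nonzero $w\in T_{q}S$, and choose a smooth curve $c:(-\varepsilon,\varepsilon)\to S$ with $c(0)=q$ and $c^{\prime}(0)=w$. Then $t\mapsto\phi(c(t))=[\gamma_{V(c(t))}]$ is a curve in $\mathcal{L}$ whose velocity at $0$ is $d\phi_{q}(w)$. Since each $V(c(t))$ is a unit tangent vector, $\gamma_{V(c(t))}$ is a variation of $\gamma:=\gamma_{V(q)}$ by unit speed geodesics, and hence $d\phi_{q}(w)=T_{\gamma}(J)$ for the Jacobi field $J(s)=\left.\frac{d}{dt}\right|_{0}\gamma_{V(c(t))}(s)$.

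The crux is to identify $J(0)$ and $J^{\prime}(0)$. Evaluating at $s=0$ gives $J(0)=\left.\frac{d}{dt}\right|_{0}c(t)=w$, while interchanging the order of covariant differentiation yields $J^{\prime}(0)=\left.\frac{D}{dt}\right|_{0}V(c(t))=\nabla_{w}V$, the ambient covariant derivative of the unit normal field $V$ in the direction $w$. Here I would use the hypothesis that $S$ is totally geodesic: differentiating $\langle V,V\rangle\equiv1$ shows that $\nabla_{w}V$ is tangent to $S$, so the Weingarten map of $S$ with respect to $V$ is precisely $w\mapsto-\nabla_{w}V$; since $S$ is totally geodesic this map vanishes, giving $\nabla_{w}V=0$ and thus $J^{\prime}(0)=0$. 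In particular $J(0)=w$ and $J^{\prime}(0)$ are both orthogonal to $\dot\gamma(0)=V(q)$, so $J\in\mathcal{J}_{\gamma}$ and $T_{\gamma}(J)$ is legitimately defined.

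Substituting these values into (\ref{n}) then yields
\[
\|d\phi_{q}(w)\|=|J(0)|^{2}-|J^{\prime}(0)|^{2}=|w|^{2}>0
\]
for every nonzero $w\in T_{q}S$. This shows at once that $d\phi_{q}$ is injective, so $\phi$ is an immersion, and that the induced metric is positive definite, so the image of $\phi$ is a space-like submanifold of $(\mathcal{L},g)$; injectivity of $\phi$ itself follows since distinct points of the totally geodesic hypersurface $S$ determine distinct orthogonal geodesics. I expect the only delicate point to be the identification $J^{\prime}(0)=\nabla_{w}V$ and the observation that total geodesy forces it to vanish; this is the hypersurface analogue of the computation already performed in Proposition \ref{rlt}, so it should present no serious obstacle, and the remainder is a direct substitution into the norm formula.
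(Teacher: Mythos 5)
Your proposal is correct and follows essentially the same route as the paper's proof: representing $d\phi_{q}(w)$ as $T_{\gamma}(J)$ for the Jacobi field $J(s)=\left.\frac{d}{dt}\right|_{0}\gamma_{V(c(t))}(s)$, using total geodesy of $S$ to get $J^{\prime}(0)=\nabla_{w}V=0$, and concluding $\Vert d\phi_{q}(w)\Vert=|w|^{2}>0$ from (\ref{n}). The only (harmless) difference is organizational: the paper verifies injectivity of $d\phi_{q}$ by a separate Jacobi-field argument before the norm computation, while you extract the immersion property directly from the positivity of the norm.
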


\begin{proof} First we observe that $\phi =\pi \circ V$, so $\phi $ is
smooth ($\pi $ is the canonical projection $T^{1}H\rightarrow \mathcal{L}$).
Also, it is one to one since $V$ is normal to $S$. Let us see that $(d\phi
)_{p}$ is injective for any $p\in S$. Let $w\in T_{p}S$ such that $(d\phi
)_{p}w=0$ and let $\alpha :(-\varepsilon ,\varepsilon )\rightarrow S$ be a
smooth curve with $\alpha ^{\prime }(0)=w$. We compute 
\begin{equation*}
0=(d\phi )_{p}w=\left. {\frac{d}{dt}}\right\vert _{0}\pi \circ V(\alpha
(t))=T_{\gamma _{V(p)}}(J),
\end{equation*}%
where $T_{\gamma _{V(p)}}$ is the isomorphism given in (\ref{isoT}) and $%
J(s)=\left. {\frac{d}{dt}}\right\vert _{0}\gamma _{V(\alpha (t))}(s)$ is the
associated Jacobi vector field in $\mathcal{J}_{\gamma _{V(p)}}$ satisfying $%
J(0)=w$. Then $J\equiv 0$ and so $w=0$, as desired.

Finally, we check that $\Vert (d\phi )_{p}w\Vert >0$ for any nonzero $w\in
T_{p}S$. By (\ref{n}) and the computation above, 
\begin{equation*}
\Vert (d\phi )_{p}w\Vert =|J(0)|^{2}-|J^{\prime }(0)|^{2}=|w|^{2}-|J^{\prime
}(0)|^{2}.
\end{equation*}%
Since $J^{\prime }(0)=\left. {\frac{D}{dt}}\right\vert _{0}V(\alpha (t))$
and $V$ is a parallel vector field along $\alpha $ ($S$ is a totally
geodesic hypersurface), we have that $J^{\prime }(0)=0$ and so $\Vert (d\phi
)_{p}w\Vert =|w|^{2}>0$. 
\end{proof}


Now, we call $\mathcal{M}_{o}$ the image of $\phi $, that is $\mathcal{M}%
_{o}=\{[\gamma _{V\left( q\right) }]\in \mathcal{L} \mid q\in S\}$, and denote 
\begin{equation*}
\mathcal{L}^{\prime }=\left\{ \ell \in \mathcal{L}\mid \ell \left( \infty
\right) \in \varphi _{+}\left( \mathcal{M}_{o}\right) \text{ and }\ell
\left( -\infty \right) \in \varphi _{-}\left( \mathcal{M}_{o}\right)
\right\} \text{,}
\end{equation*}%
which is an open subset of $\mathcal{L}$.

The following theorem refers to Definition \ref{hvm} (b).

\begin{theorem}
\label{Teo1}The submanifold $\mathcal{M}_{o}$ is homologically volume
maximizing in $\mathcal{L}^{\prime }$ and $\mathcal{M}_{o}$ is unique with
this property.
\end{theorem}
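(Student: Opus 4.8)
The plan is to exhibit a calibration $\psi$ on $\mathcal{L}^{\prime}$ that calibrates $\mathcal{M}_{o}$ and then invoke Mealy's fundamental theorem (Theorem \ref{tfc}), which immediately gives that $\mathcal{M}_{o}$ is homologically volume maximizing in $\mathcal{L}^{\prime}$; the uniqueness clause will be read off from the equality case. Before anything else I would record a topological simplification. Via the isometry $F$ of (\ref{isoF}), the open set $\mathcal{L}^{\prime}$ is carried onto the product $\varphi_{-}(\mathcal{M}_{o})\times\varphi_{+}(\mathcal{M}_{o})$ of the two disjoint open hemispheres of $S^{n}$ bounded by the equator $\partial_{\infty}S$, and $\mathcal{M}_{o}$ becomes the graph of the reflection of $S^{n}$ exchanging them. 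In particular $\mathcal{L}^{\prime}$ is a product of two balls, hence contractible, so any space-like $V$ with $\partial V=\partial U$ is automatically homologous to $U$; thus the homological conclusion of Theorem \ref{tfc} already yields the full statement of Definition \ref{hvm}(a) on $\mathcal{L}^{\prime}$, and it remains only to construct $\psi$ and to analyze equality.

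Next I would identify the calibrated nature of $\mathcal{M}_{o}$. In the Jacobi model (\ref{isoT})--(\ref{n}), write $A=J(0)$, $B=J^{\prime}(0)$ (parallel along $\gamma$); then $T_{[\gamma]}\mathcal{L}$ is the space of pairs $(A,B)$ with square norm $|A|^{2}-|B|^{2}$, and the two null $n$-distributions are $\ker d\varphi_{\mp}=\{A\pm B=0\}$, since a variation fixing $\gamma(\pm\infty)$ kills the $e^{\mp s}$-term of $J=\tfrac12 e^{s}(A+B)+\tfrac12 e^{-s}(A-B)$. By the computation in the proof of Proposition \ref{F0sl} one has $J^{\prime}(0)=0$ along $\mathcal{M}_{o}$ (the normal field of the totally geodesic $S$ is parallel), so $T_{[\gamma]}\mathcal{M}_{o}=\{B=0\}$ is the neutral ``diagonal'' $A=A$; evaluating the two top null forms on it shows they coincide, i.e.\ $\mathcal{M}_{o}$ satisfies the special Lagrangian identity of Proposition \ref{warren} pointwise (phase $c=1$). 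I would also note here, for later use, that any space-like $n$-plane meets each $\ker d\varphi_{\pm}$ trivially (a nonzero vector there has norm $0$), so both $\varphi_{\pm}$ restrict to local diffeomorphisms on every space-like submanifold.

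The crux is the construction of a genuinely closed $\psi$ on $\mathcal{L}^{\prime}$ that, in suitable null frames, equals Warren's form $\phi_{c}$ of Proposition \ref{warren}; this is the main obstacle. The naive guess $a\,\varphi_{-}^{\ast}\omega+b\,\varphi_{+}^{\ast}\omega$ built from the round volume form $\omega$ of $S^{n}$ is closed but cannot be a calibration: in the null frame adapted to $\ker d\varphi_{\pm}$ the metric (\ref{mss}) forces a power of $|q-p|^{2}=2-2\langle p,q\rangle$ as conformal factor between $\varphi_{+}^{\ast}\omega$ and the vertical null volume, and since $\log|q-p|^{2}$ is not of the form $u(p)+v(q)$, no choice of a (possibly varying) phase $c$ can restore the pointwise normalization of $\phi_{c}$ while keeping $\psi$ closed. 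One cannot repair this by parallelism either, since $(\mathcal{L},g)$ is an Einstein symmetric space of nonzero scalar curvature and so carries no parallel special Lagrangian form. The plan is therefore to work in the null coordinates supplied by $F$ on the product of hemispheres and to take $\psi=\tfrac12(\alpha+\beta)$ for a pair of complementary null distributions chosen not as $\ker d\varphi_{\pm}$ but tilted so as to absorb the $|q-p|$-factor; the coefficients of the top forms $\alpha,\beta$ are then pinned by the invariant normalization $\psi\wedge\psi=\tfrac12\,e^{1}\wedge\cdots\wedge e^{n}\wedge f^{1}\wedge\cdots\wedge f^{n}$ together with positivity, so that $\psi_{q}=\phi_{c(q)}$ at each point, and closedness $d\psi=0$ is checked by direct computation exploiting that $S$ is totally geodesic with reflection symmetry. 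Granting this, Proposition \ref{warren} supplies the pointwise inequality and the previous paragraph supplies calibration of $\mathcal{M}_{o}$, so Theorem \ref{tfc} applies.

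Finally, for uniqueness, suppose a space-like $V$ with $\partial V=\partial U$ attains equality in (\ref{volUvolV}). Since $\psi$ is closed and $V$ is homologous to $U$, Stokes gives $\int_{V}\psi=\int_{U}\psi=\mathrm{vol}_{n}(U)=\mathrm{vol}_{n}(V)$, whence $\psi_{q}(\xi)=\mathrm{vol}_{n}(\xi)$ at every point of $V$; as Warren's inequality is strict off the special Lagrangian cone, $V$ is special Lagrangian for the same structure as $\mathcal{M}_{o}$. By the observation in the second paragraph, $\varphi_{-}$ realizes both $U$ and $V$ as graphs over a common open subset of $\varphi_{-}(\mathcal{M}_{o})$ sharing the boundary data coming from $\partial U$. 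The special Lagrangian condition then becomes a first-order equation for the graphing map, making it locally the gradient of a potential solving a degenerate-elliptic equation; uniqueness for the associated Dirichlet problem forces $V=U$. As throughout, the delicate ingredient is the explicit closed $\psi$ of the third paragraph, from which both maximization and uniqueness follow by the calibration formalism.
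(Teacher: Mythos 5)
Your overall strategy---transport everything through $F$ to $S_{-}^{n}\times S_{+}^{n}$, exhibit a closed $n$-form that matches Warren's $\phi _{c}$ in isometric null frames, and apply Mealy's Theorem \ref{tfc}---is the paper's strategy, and several of your preliminary observations (contractibility of $\mathcal{L}^{\prime }$, $\mathcal{M}_{o}$ as the graph of the reflection $T$, its tangent planes being special Lagrangian with phase $c=1$) are sound and agree with the paper. But the crux of the proof, the construction of the calibration, is missing, and in fact you explicitly talk yourself out of the construction that works. The paper's calibration is exactly your ``naive guess'' with \emph{non-constant} weights,
\[
\psi _{\left( p,q\right) }=\tfrac{1}{2}\left( \pi _{1}^{\ast }\left(
\left\vert p_{0}\right\vert ^{-n}\theta \right) +\pi _{2}^{\ast }\left(
\left\vert q_{0}\right\vert ^{-n}\theta \right) \right) ,
\]
which is automatically closed. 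Your objection---that no such form can coincide pointwise with $\phi _{c}$ because $\left\vert q-p\right\vert ^{2}$ does not split as a function of $p$ plus a function of $q$---is correct but irrelevant: a calibration need not realize Warren's normalization exactly. It suffices that in the isometric frame $A^{\ast }\psi _{\left( p,q\right) }=C\,\phi _{c}$ with $C\geq 1$, for then any space-like $\xi $ with $\psi (\xi )>0$ satisfies $\psi (\xi )=C\phi _{c}(\xi )\geq C\,\mathrm{vol}_{n}(\xi )\geq \mathrm{vol}_{n}(\xi )$. With the weights above one gets $C=\left\vert q-p\right\vert ^{n}/\bigl( 2^{n}\left\vert p_{0}\right\vert ^{n/2}\left\vert q_{0}\right\vert ^{n/2}\bigr) $ and $c=\left\vert q_{0}\right\vert ^{n/2}/\left\vert p_{0}\right\vert ^{n/2}$, and the elementary inequality $\left\vert q-p\right\vert ^{2}\geq \left( q_{0}-p_{0}\right) ^{2}=\left\vert p_{0}\right\vert ^{2}+2\left\vert p_{0}\right\vert \left\vert q_{0}\right\vert +\left\vert q_{0}\right\vert ^{2}\geq 4\left\vert p_{0}\right\vert \left\vert q_{0}\right\vert $ (valid precisely because $p_{0}<0<q_{0}$ on $S_{-}^{n}\times S_{+}^{n}$) yields $C\geq 1$, with equality exactly when $q=Tp$, i.e.\ on $F\left( \mathcal{M}_{o}\right) $.

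This last point also shows that your uniqueness argument is both overbuilt and unavailable to you. Since $C>1$ strictly off $F\left( \mathcal{M}_{o}\right) $, no space-like $n$-plane based at a point of $\mathcal{L}^{\prime }-\mathcal{M}_{o}$ can be calibrated by $\psi $; hence equality of volumes forces $\mathcal{V}\subset \mathcal{M}_{o}$ and so $\mathcal{V}=\mathcal{U}$---no degenerate-elliptic Dirichlet theory is needed. By contrast, if your hoped-for $\psi $ (equal to $\phi _{c(q)}$ at \emph{every} point) existed, calibrated planes would exist at every point of $\mathcal{L}^{\prime }$ and uniqueness would genuinely require the PDE argument you only sketch; but such a $\psi $ is precisely what your own obstruction (the non-splitting of $\log \left\vert q-p\right\vert ^{2}$) rules out, and your proposed repair via ``tilted null distributions'' is never constructed and is blocked by the same obstruction. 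As it stands, the proposal has a genuine gap at its central step: the missing idea is that one should give up exact pointwise normalization and instead arrange $C\geq 1$ with equality exactly along $F\left( \mathcal{M}_{o}\right) $, which simultaneously produces the calibration, the calibratedness of $\mathcal{M}_{o}$, and the uniqueness.
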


\begin{proof} We work with the Poincar\'{e} unit ball model of $H
$, included in $\mathbb{R}^{n+1}$. We may suppose without loss of generality
that $S=H\cap e_{0}^{\bot }$, where $\left\{ e_{0},\dots ,e_{n}\right\} $ is
the canonical basis of $\mathbb{R}^{n+1}$. Since in this model $S$ is an $n$%
-dimensional ball, we denote it by $B$ (to avoid confusion with the border $%
S^{n}$ of $H$).

By the fundamental theorem of calibrations (Theorem \ref{tfc}) it suffices
to find a calibration $\psi $ on $\mathcal{L}^{\prime }$ calibrating $%
\mathcal{M}_{o}$. In order to define $\psi $ it will be convenient to
consider the model of $\mathcal{L}$ given by the isometry $F$ in (\ref{isoF}%
). Via this isometry, $\mathcal{L}^{\prime }$ corresponds to $%
S_{-}^{n}\times S_{+}^{n}$, where $S_{\pm }^{n}=\left\{ p\in S^{n}\mid \pm
p_{0}>0\right\} $ are the upper and lower hemispheres ($p_{0}$ is the $0$-th
coordinate of $p$). Recall that the open set $S_{-}^{n}\times
S_{+}^{n}\subset \left( S^{n}\times S^{n}\right) -\Delta _{n}$ carries the
split semi-Riemannian metric (\ref{mss}) induced from the diffeomorphism $F$.

Let $\theta $ be the volume form on the sphere $S^{n}$ with the round metric
of constant curvature one. Let $\pi _{i}:\left( S^{n}\times S^{n}\right)
-\Delta _{n}\rightarrow S^{n}$ be the projection to the $i$-th factor ($i=1,2
$) and let $\psi $ be the $n$-form on $S_{-}^{n}\times S_{+}^{n}$ defined by 
\begin{equation*}
\psi _{\left( p,q\right) }=\frac{1}{2}\left( \pi _{1}^{\ast }\left( \frac{1}{%
\left\vert p_{0}\right\vert ^{n}}~\theta \right) +\pi _{2}^{\ast }\left( 
\frac{1}{\left\vert q_{0}\right\vert ^{n}}~\theta \right) \right) \text{.}
\end{equation*}%
Let us see that $\psi $ is a calibration calibrating $F\left( \mathcal{M}%
_{o}\right) $. Clearly, $\psi $ is closed, since each term is the pull-back
of an $n$-form on the $n$-dimensional manifold $S^{n}$.

Now we fix $\left( p,q\right) $ in $S_{-}^{n}\times S_{+}^{n}$ and apply
Proposition \ref{warren}. Let $\left\{ e_{1},\dots ,e_{n},f_{1},\dots
,f_{n}\right\} $ be the canonical basis of $\mathbb{R}^{n,n}$ as in that
proposition and let $A:\mathbb{R}^{n,n}\rightarrow T_{p}S^{n}\times
T_{q}S^{n}$ be the linear isomorphism given by 
\begin{equation*}
A\left( e_{i}\right) =\left( v_{i},0\right) \text{,\ \ \ \ \ \ \ }A\left(
f_{j}\right) =\left( 0,T_{p,q}v_{j}\right) \text{,}
\end{equation*}%
where $b=\left\{ v_{i}\mid 1\leq i\leq n\right\} $ is an orthogonal basis of 
$T_{p}S^{n}$ with $\left\vert v_{i}\right\vert =\left\vert q-p\right\vert /2$
and $T_{p,q}$ is the reflection defined in the paragraph above (\ref{mss}).
Then $A$ is an isometry (the inner product on $T_{p}S^{n}\times T_{q}S^{n}$
is as in (\ref{mss})). A straightforward computation yields%
\begin{equation*}
A^{\ast }\psi _{\left( p,q\right) }=C\phi _{c}
\end{equation*}%
with%
\begin{equation*}
C=\frac{\left\vert q-p\right\vert ^{n}}{2^{n}\left\vert q_{0}\right\vert
^{n/2}\left\vert p_{0}\right\vert ^{n/2}}\ \ \ \ \ \ \ \ \ \text{and\ \ \ \
\ \ \ \ \ }c=\frac{\left\vert q_{0}\right\vert ^{n/2}}{\left\vert
p_{0}\right\vert ^{n/2}}\text{.}
\end{equation*}%
Now we see that $C\geq 1$. We compute 
\begin{eqnarray*}
\left\vert q-p\right\vert ^{2} \geq \left( q_{0}-p_{0}\right)
^{2}=q_{0}^{2}-2q_{0}p_{0}+p_{0}^{2}=\left\vert q_{0}\right\vert ^{2}+2\left\vert q_{0}\right\vert \left\vert
p_{0}\right\vert +\left\vert p_{0}\right\vert ^{2}\text{,}
\end{eqnarray*}%
since $p_{0}<0$ and $q_{0}>0$. Thus,%
\begin{equation}\label{q-p}
\left\vert q-p\right\vert ^{2}-4\left\vert q_{0}\right\vert \left\vert
p_{0}\right\vert \geq \left( \left\vert q_{0}\right\vert -\left\vert
p_{0}\right\vert \right) ^{2}\geq 0\text{.}
\end{equation}
Hence, $\left\vert q-p\right\vert \geq 2\left\vert q_{0}\right\vert^{1/2} \left\vert
p_{0}\right\vert^{1/2}$. Consequently, Proposition \ref{warren} implies that $\psi $ is a calibration
on $S_{-}^{n}\times S_{+}^{n}$ and so $C\geq 1$.

Next, we verify that the corresponding calibration on $\mathcal{L}^{\prime }$
calibrates $\mathcal{M}_{o}$. We have that 
\begin{equation*}
F\left( \mathcal{M}_{o}\right) =\left\{ \left( p,Tp\right) \mid p\in
S_{-}^{n}\right\} \text{,}
\end{equation*}%
where $T:S^{n}\rightarrow S^{n}$ is defined by $T=T_{e_{0},-e_{0}}$, that
is,\ $T\left( p_{0},p_{1},\dots ,p_{n}\right) =\left( -p_{0},p_{1},\dots
,p_{n},\right) $. Therefore, 
\begin{equation*}
T_{\left( p,Tp\right) }F\left( \mathcal{M}_{o}\right) =\left\{ \left(
v,Tv\right) \mid v\in T_{p}S^{n}=p^\bot\right\} \text{.}
\end{equation*}
Now, by polarization of (\ref{mss}), for $v_{i}\in b$ as above, the set $\left\{
\left( v_{i},Tv_{i}\right) \mid 1\leq i\leq n\right\} $ is an orthonormal
basis of $T_{\left( p,Tp\right) }F\left( \mathcal{M}_{o}\right) $. We compute%
\begin{equation*}
\begin{array}{l}
\psi _{\left( p,Tp\right) }\left( \left( v_{1},Tv_{1}\right) \wedge \dots
\wedge \left( v_{n},Tv_{n}\right) \right) = \\ 
\ \ \ \ \ \ \ \ \ \ \ \ =\frac{1}{2}\left( \frac{1}{\left\vert
p_{0}\right\vert ^{n}}\theta _{p}\left( v_{1}\wedge \dots \wedge
v_{n}\right) +\frac{1}{\left\vert -p_{0}\right\vert ^{n}}\theta _{Tp}\left(
Tv_{1}\wedge \dots \wedge Tv_{n}\right) \right)  \\ 
\ \ \ \ \ \ \ \ \ \ \ \ =\frac{1}{2}\left( \frac{1}{\left\vert
p_{0}\right\vert ^{n}}\left( \frac{\left\vert p-Tp\right\vert }{2}\right)
^{n}+\frac{1}{\left\vert -p_{0}\right\vert ^{n}}\left( \frac{\left\vert
p-Tp\right\vert }{2}\right) ^{n}\right) =1%
\end{array}%
\end{equation*}%
since $T$ is an isometry and $\left\vert p-Tp\right\vert =2\left\vert
p_{0}\right\vert $.

To conclude, we show the uniqueness. Suppose that $\mathcal{U}$ and $%
\mathcal{V}$ are as in Definition \ref{hvm} with $M=\mathcal{M}_{o}$ and $N=%
\mathcal{L}^{\prime }$. We have%
\begin{equation*}
\text{vol}_{n}\left( \mathcal{V}\right) =\int_{\mathcal{V}}\text{vol}%
_{n}\leq \int_{\mathcal{V}}i^{\ast }\psi =\int_{\mathcal{U}}i^{\ast }\psi =%
\text{vol}_{n}\left( \mathcal{U}\right) 
\end{equation*}%
(here $i$ is the inclusion;\ the inequality holds since $\psi $ is a
calibration, the last and next to last equalities hold since $\mathcal{U}$
is calibrated by $\psi $ and $\mathcal{U}$ and $\mathcal{V}$ are homologous,
respectively). If vol$_{n}\left( \mathcal{V}\right) =$ vol$_{n}\left( 
\mathcal{U}\right) $, then vol$_{n}=i^{\ast }\psi $ on $\mathcal{V}$ and
this is only possible if $\mathcal{V}=\mathcal{U}$ since (\ref{q-p}) yields that $C>1$ outside $\mathcal{M}_{o}$.
\end{proof}

In the proof of the next theorem we will need the following elementary
topological lemma. We give the proof for the sake of completeness.

\begin{lemma}
\label{lemaBola}Let $B$ be as above the open ball of radius one centered at
the origin of $\mathbb{R}^{n+1}$ and let $h:B\rightarrow B$ be a local
homeomorphism such that $h$ coincides with the identity on the complement of
a closed ball contained in $B$. Then $h$ is a homeomorphism.
\end{lemma}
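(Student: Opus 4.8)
The plan is to show that $h$ is a covering map of $B$ onto itself and then exploit the simple connectivity of $B$. Since $h$ is a local homeomorphism, it is in particular an open map; the crucial additional property is that $h$ is \emph{proper}, and a proper local homeomorphism between connected, locally compact, Hausdorff manifolds is a covering map. Once $h$ is known to be a covering, the fact that $B$ is connected and simply connected forces the number of sheets to equal the index $[\pi_1(B):h_*\pi_1(B)]=1$, so $h$ is bijective; a bijective local homeomorphism is automatically a homeomorphism.

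First I would establish properness. Write $\overline{B}_r=\{x:|x|\le r\}$, with $r<1$, for the closed ball outside of which $h$ is the identity. Given a compact set $K\subset B$, there is $\rho\in(r,1)$ with $K\subseteq\overline{B}_\rho$. For any $x$ with $|x|>r$ we have $h(x)=x$, so $h(x)\in K$ forces $x\in K$; combining this with the points of $\overline{B}_r$ gives
\[
h^{-1}(K)\subseteq \overline{B}_r\cup K\subseteq \overline{B}_\rho .
\]
Since $h^{-1}(K)$ is closed (by continuity of $h$) and is contained in the compact set $\overline{B}_\rho\subset B$, it is compact, so $h$ is proper. As a proper map into the locally compact Hausdorff space $B$, $h$ is a closed map, hence $h(B)$ is closed; being also open and nonempty in the connected set $B$, we conclude $h(B)=B$, i.e.\ $h$ is surjective.

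The remaining and main step is to upgrade ``proper local homeomorphism'' to ``covering map''. Here I would argue that properness makes each fibre $h^{-1}(q)$ discrete and compact, hence finite, say $\{x_1,\dots,x_k\}$; choosing disjoint neighbourhoods $U_i\ni x_i$ on which $h$ restricts to a homeomorphism and then using properness to shrink a neighbourhood $W$ of $q$ so that $h^{-1}(W)\subseteq\bigcup_i U_i$, one obtains an evenly covered neighbourhood of $q$. This is the only delicate point, since one must rule out part of a fibre escaping toward $\partial B$, which is exactly what properness forbids. With $h$ a covering map, connectedness of the domain together with $\pi_1(B)=0$ yields a single sheet, completing the proof. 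Alternatively, one may extend $h$ by the identity to a local homeomorphism $\widehat{h}$ of all of $\mathbb{R}^{n+1}$, equal to the identity off $\overline{B}_r$, and run the same argument on the simply connected space $\mathbb{R}^{n+1}$; since $\widehat{h}$ fixes every point outside $B$, its being a homeomorphism immediately yields that $h=\widehat{h}|_B$ is one.
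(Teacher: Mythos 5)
Your proof is correct, and its overall skeleton --- show that $h$ is a covering map of $B$ onto itself and then use simple connectivity of $B$ to conclude it has one sheet --- is the same as the paper's. The differences lie in how the two key inputs are obtained. For surjectivity, the paper extends $h$ continuously to $\bar{B}$ and argues that a missed point would make $S^{n}$ a deformation retract of $B$, i.e.\ it invokes a Brouwer-type no-retraction theorem; you instead prove that $h$ is proper (from $h^{-1}(K)\subseteq \overline{B}_{r}\cup K$), hence closed, so that $h(B)$ is open, closed and nonempty in the connected space $B$. This is more elementary, using only point-set topology. For the covering property, you quote (and correctly sketch) the standard fact that a proper local homeomorphism between manifolds is a covering map, whereas the paper verifies even covering by hand: finiteness of fibres and the shrinking of the neighborhood of $q$ come from an accumulation-point argument in the compact set $\bar{B}$, using that $h$ is the identity near $\partial B$. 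Note these are the same mechanism in different clothing: your properness is extracted from exactly the hypothesis the paper uses, and it plays precisely the role of forbidding fibres from escaping toward $\partial B$. What your route buys is modularity --- properness is isolated as the key property, the covering criterion is a citable standard theorem, and your alternative of extending $h$ by the identity to all of $\mathbb{R}^{n+1}$ is a clean simplification; what the paper's route buys is a fully self-contained argument, at the cost of appealing to algebraic topology for surjectivity.
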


\begin{proof} Notice that $h$ extends continuously to the compact ball $%
\bar{B}$. The map $h$ is onto $B$ since otherwise the sphere $S^{n}$ would
be a deformation retract of $B$, a contradiction. Now we check that $h$ is
one to one. Since $B$ is simply connected, it suffices to see that $h$ is a
covering map. For $q\in B$, the preimage of $\left\{ q\right\} $ consists of
a finite number of points $\left\{ p_{1},\dots ,p_{m}\right\} $ (it can not
contain an accumulation point, since $h$ is locally one to one). Since $h$
is a local homeomorphism, there exist open neighborhoods $U_{1},\dots ,U_{m}$
and $V$ of $p_{1},\dots ,p_{m}$ and $q$, respectively, such that $\left.
h\right\vert _{U_{i}}:U_{i}\rightarrow V$ are homeomorphisms. It remains to
show that there exists an open neighborhood $W\subset V$ of $q$ satisfying
that $h^{-1}\left( W\right) \subset U_{1}\cup \dots \cup U_{m}$. For any $%
k\in \mathbb{N}$, let $B_{k}$ the ball of center $q$ and radius $1/k$. We
can take $W=B_{k_{o}}$ for some $k_{o}$. Indeed, if for each $k$ there
exists $x_{k}\in h^{-1}\left( B_{k}\right) $ with $x_{k}\notin U_{1}\cup
\dots \cup U_{m}$, an accumulation point of the sequence $x_{k}$ in $\bar{B}$
should be one of the $p_{i}$'s or be in the border of $\bar{B}$, which is a
contradiction. 
\end{proof}

Let $\mathcal{F}_{o}$ be the foliation of $H$ determined by the unit vector
field $V$ normal to $B$ considered at the beginning of this section. Since $%
\mathcal{M}_{o}$ is a space-like submanifold of $\mathcal{L}$, by
Proposition \ref{rlt} we know that $\mathcal{F}_{o}$ is a t.e.r.\ geodesic
foliation. By the volume of a t.e.r.\ geodesic foliation $\mathcal{F}$ we
understand the volume of the corresponding submanifold $\mathcal{M}$ of $%
\mathcal{L}$.

\begin{theorem}\label{Teo2} 
The foliation $\mathcal{F}_{o}$ is volume maximizing among all
t.e.r.\ geodesic foliations $\mathcal{F}$ of $H$. More precisely, as $%
\mathcal{M}_{o}$ is not compact, $\mathcal{F}_{o}$ has maximum volume among
all t.e.r.\ geodesic foliations $\mathcal{F}$ of $H$ such that the leaves of 
$\mathcal{F}$ and $\mathcal{F}_{o}$ intersecting the complement of some
compact subset of $B$ coincide.
\end{theorem}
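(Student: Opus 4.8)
The plan is to reduce Theorem \ref{Teo2} to Theorem \ref{Teo1} by an excision-and-comparison argument. Let $\mathcal{F}$ be a t.e.r.\ foliation whose leaves agree with those of $\mathcal{F}_{o}$ outside the leaves meeting $B\setminus K$, for some compact $K\subset B$, and let $\mathcal{M}$ be its submanifold of $\mathcal{L}$. By Proposition \ref{rlt}, $\mathcal{M}$ is space-like, and by Proposition \ref{rltdifeo} the Gauss maps $\varphi_{\pm}|_{\mathcal{M}}$ are local diffeomorphisms. Since both foliations share every leaf through $B\setminus K$, the submanifolds $\mathcal{M}$ and $\mathcal{M}_{o}$ coincide, as subsets of $\mathcal{L}$, outside a relatively compact piece; write $\mathcal{M}_{o}=\mathcal{W}\cup V_{o}$ and $\mathcal{M}=\mathcal{W}\cup V'$, where $\mathcal{W}$ is the common part and $V_{o},V'$ are the differing relatively compact pieces, sharing the common boundary $\partial V_{o}=\partial V'$ (the leaves through $\partial K$, orthogonal to $B$ for both foliations). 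Choosing $K$ to be a closed ball, $V_{o}=\phi(\mathrm{int}\,K)$ is an open subset of $\mathcal{M}_{o}$ with compact closure and smooth boundary, exactly as in Definition \ref{hvm}.

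Granting for the moment the containment $V'\subset\mathcal{L}'$, I would apply Theorem \ref{Teo1} with $\mathcal{U}=V_{o}$ and $\mathcal{V}=V'$. The piece $V'$ is space-like and relatively compact with $\partial V'=\partial V_{o}$, and the homology requirement is automatic: via the isometry $F$ of (\ref{isoF}), $\mathcal{L}'$ corresponds to $S_{-}^{n}\times S_{+}^{n}$, a product of two balls, hence contractible, so $H_{n}(\mathcal{L}')=0$ and the $n$-cycle $\overline{V'}\cup_{\partial}(-\overline{V_{o}})$ bounds. Theorem \ref{Teo1} then gives $\mathrm{vol}_{n}(V_{o})\ge\mathrm{vol}_{n}(V')$. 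As $\mathcal{M}$ and $\mathcal{M}_{o}$ agree on $\mathcal{W}$, the common contribution cancels and $\mathrm{vol}(\mathcal{M})-\mathrm{vol}(\mathcal{M}_{o})=\mathrm{vol}_{n}(V')-\mathrm{vol}_{n}(V_{o})\le 0$, which is the asserted maximality; uniqueness is inherited from Theorem \ref{Teo1}, since equality forces $V'=V_{o}$, hence $\mathcal{M}=\mathcal{M}_{o}$ and $\mathcal{F}=\mathcal{F}_{o}$.

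The heart of the matter is therefore the containment $\mathcal{M}\subset\mathcal{L}'$, and I would prove the stronger statement that $\varphi_{+}|_{\mathcal{M}}:\mathcal{M}\to S_{+}^{n}$ is a global diffeomorphism (symmetrically for $\varphi_{-}$). First I would parametrize $\mathcal{M}$ by $B$: since $B$ is totally geodesic, each leaf meets it at most once, so the foot-point assignment $\beta(q)=[\gamma_{V(q)}]=(\pi\circ V)(q)$ is injective; the computation in the proof of Proposition \ref{rlt} shows $\Vert d\beta_{q}(w)\Vert=|w|^{2}-|\nabla_{w}V|^{2}>0$ by the t.e.r.\ inequality, so $\beta$ is a space-like immersion and thus an open embedding $B\hookrightarrow\mathcal{M}$ which coincides with $\phi$ of Proposition \ref{F0sl} off $K$. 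Since $\varphi_{+}\circ\phi:B\to S_{+}^{n}$ is the forward-endpoint diffeomorphism of the orthogonal foliation, composing $\varphi_{+}\circ\beta$ with the orthogonal projection $S^{n}\to e_{0}^{\bot}$ onto the closed unit ball and post-composing with the inverse of that $\mathcal{F}_{o}$-diffeomorphism produces a map $h:B\to B$ equal to the identity off $K$; as $\beta$ and $\varphi_{+}$ are local diffeomorphisms and the projection chart is a local diffeomorphism away from the equator, $h$ is a local homeomorphism where defined. Lemma \ref{lemaBola} then upgrades $h$ to a global homeomorphism, whence $\varphi_{+}\circ\beta$, and so $\varphi_{+}|_{\mathcal{M}}$, is a diffeomorphism onto $S_{+}^{n}$; in particular $\varphi_{+}(\mathcal{M})\subset S_{+}^{n}$, and likewise $\varphi_{-}(\mathcal{M})\subset S_{-}^{n}$, giving $\mathcal{M}\subset\mathcal{L}'$.

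The main obstacle is precisely the construction of this self-map $h$ of the ball, that is, verifying that the transported forward-endpoint map genuinely takes values in the \emph{open} upper hemisphere $S_{+}^{n}$, so that the projection chart is nondegenerate along the modified leaves and $h$ is a local homeomorphism on all of $B$ rather than merely off $K$. Geometrically this is the assertion that a t.e.r.\ leaf agreeing with $\mathcal{F}_{o}$ near infinity cannot tilt so far that its forward endpoint reaches the equator $\partial S_{+}^{n}$ or crosses into $S_{-}^{n}$, and I expect to extract it from the bound $|\nabla_{v}V|<|v|$, which controls the rotational part of the motion of the leaves, together with the connectedness of $K$ and the fact that the endpoints already lie in $S_{+}^{n}$ along $\partial K$. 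Once this hemisphere confinement is established, Lemma \ref{lemaBola} supplies the global bijectivity at no further cost, and the reduction to Theorem \ref{Teo1} completes the proof.
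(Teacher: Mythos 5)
Your overall architecture --- excise the compact discrepancy, apply Theorem \ref{Teo1} with $\mathcal{U}=V_{o}$ and $\mathcal{V}=V'$ --- is exactly the paper's, and your homology step is a correct simplification of it: since $\mathcal{L}'\cong S_{-}^{n}\times S_{+}^{n}$ is contractible, $H_{n}(\mathcal{L}')=0$ and any two compact space-like pieces with common boundary are automatically homologous; the paper instead realizes $\mathcal{U}$ and $\mathcal{V}$ as graphs over the ball inside $B\times B$ (this is the only place it needs Lemma \ref{lemaBola}), which your observation renders unnecessary. However, the proof is not complete. The containment $\mathcal{M}\subset\mathcal{L}'$, which you yourself flag as ``the main obstacle,'' is precisely the step carrying the real content of the theorem, and you leave it unproved, offering only the expectation that it follows from the t.e.r.\ bound $\left\vert \nabla _{v}V\right\vert <\left\vert v\right\vert$ together with connectedness and the boundary data. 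That expectation is misdirected: the quantitative t.e.r.\ inequality is not what confines the endpoints; what does it is the qualitative fact that distinct leaves of a foliation are disjoint. The paper's argument is: if the field $W$ defining $\mathcal{F}$ were tangent to $B$ at some point $x$, the leaf through $x$ would lie entirely in the totally geodesic $B$, hence would escape $B_{o}$ and pass through points of $B\setminus B_{o}$; through each such point there passes a leaf of $\mathcal{F}$ orthogonal to $B$, so two distinct leaves of $\mathcal{F}$ would intersect --- a contradiction. Hence $\left\langle V,W\right\rangle$ never vanishes on $B$, is positive off $B_{o}$, and so is positive on all of the connected set $B$. This transversality is what pins $\gamma _{W(x)}(\pm \infty )$ in the \emph{open} hemispheres: a geodesic entering the open half-space bounded by $B$ stays in it by convexity, and its endpoint cannot land on the equator because a geodesic ray from a point of $B$ to a point of $\partial _{\infty }B$ lies in $B$, which would again force tangency. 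Nothing in your sketch produces this dichotomy.

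The missing step also undermines your preparatory claims, making them circular rather than merely incomplete. For $w\in T_{q}B$, the Jacobi field representing $d\beta _{q}(w)$ has $J(0)$ equal to the component $w^{\perp }$ of $w$ orthogonal to the leaf direction $W(q)$, not $w$ itself (inside $K$ the leaves need not be orthogonal to $B$), so the correct expression is $\Vert d\beta _{q}(w)\Vert =|w^{\perp }|^{2}-|\nabla _{w}W|^{2}$; this is positive precisely when $w^{\perp }\neq 0$, i.e.\ when $W(q)\notin T_{q}B$ --- the very transversality you have postponed. Likewise ``each leaf meets $B$ at most once'' fails exactly for a leaf contained in $B$, which is again what transversality excludes. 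Two further remarks: once transversality is established, the inclusion $\varphi _{\pm }(\mathcal{M})\subset S_{\pm }^{n}$ follows at once, and your detour through projection charts and Lemma \ref{lemaBola} to get global bijectivity of $\varphi _{+}|_{\mathcal{M}}$ is not needed, since applying Theorem \ref{Teo1} only requires the inclusion; and both you and the paper tacitly use that \emph{every} leaf of $\mathcal{F}$ meets $B$ (so that $\mathcal{M}=\beta (B)$), which deserves a word --- for instance, the set of leaves crossing $B$ is open and closed in the connected manifold $\mathcal{M}$.
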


\begin{remark}
For the \emph{three dimensional} hyperbolic space, the space of oriented
lines is a Kahler semi-Riemannian manifold \cite{Salvai-2007,Georgiou-2010} and $\mathcal{%
M}_{o}$ is a complex space-like surface. Then the assertion of the theorem
can also be proved using the Kahler form as a calibration \cite{Mealy}, the
semi-Riemannian version of the classical Wirtinger Theorem.
\end{remark}

\begin{proof} Let $\mathcal{F}$ be a t.e.r.\ geodesic foliation of $H$ as
in the statement of the theorem and let $\mathcal{M}$ be the corresponding
submanifold of $\mathcal{L}$. By Propositions \ref{F0sl} and \ref{rlt}, $%
\mathcal{M}_{o}$ and $\mathcal{M}$ are space-like.

Without loss of generality, we may take the compact subset of $B$ as being a
geodesic ball $B_{o}$ centered at $0$. Let $\mathcal{U}$ be the subset of $%
\mathcal{M}_{o}$ consisting of all the oriented geodesics in $\mathcal{M}%
_{o} $ intersecting $B$ in $B_{o}$. Then we can write $\mathcal{M}$ as $%
\mathcal{M}=\left( \mathcal{M}_{o}-\mathcal{U}\right) \cup \mathcal{V}$ for
some open space-like submanifold $\mathcal{V}$ of $\mathcal{M}$ of dimension 
$n$ with compact closure satisfying $\partial \mathcal{V}=\partial \mathcal{U%
}$.

By Definition \ref{hvm}~(a), we must prove that vol$_{n}\left( \mathcal{V}%
\right) \leq $ vol$_{n}\left( \mathcal{U}\right) $. This inequality will
follow from Theorem \ref{Teo1} provided that, according to Definition \ref%
{hvm}\ (b), we verify that $\mathcal{M}$ is contained in $\mathcal{L}%
^{\prime }$ and that $\mathcal{V}$ is homologous to $\mathcal{U}$.

Let us see first that $\mathcal{M}$ is contained in $\mathcal{L}^{\prime }$,
that is, $\varphi _{\pm }\left( \mathcal{M}\right) \subset S_{\pm }^{n}$.
Let $W$ be the unit vector field on $H$ which determines the foliation $%
\mathcal{F}$. The restriction of $W$ to $B$ coincides with $V$ on the
complement of $B_{o}$ and moreover cannot be tangent to $B$ at any point $x$. 
Otherwise, the geodesic with initial velocity $W\left( x\right) $, which
remains in $B$ ($B$ is totally geodesic), would intersect the geodesics of $%
\mathcal{F}$ intersecting the complement of $B_{o}$, a contradiction. By
continuity, $\left\langle V,W\right\rangle $ is a positive function on $B$.
Then the backward and forward Gauss maps carry the geodesics with initial
velocities $\left. W\right\vert _{B}$ to $S_{-}^{n}$ and $S_{+}^{n}$,
respectively. Thus, $\mathcal{M}\subset \mathcal{L}^{\prime }$.

Now we prove that $\mathcal{U}$ and $\mathcal{V}$ are homologous in $%
\mathcal{L}^{\prime }$. In order to do this, it will be convenient to
identify $\mathcal{L}^{\prime }$ with the product $B\times B$ and $\mathcal{M%
}_{o}$ and $\mathcal{M}$ with subsets of $B\times B$, which will turn out to
be graphs of certain maps on the ball, and then it will be clear that they
are homologous in the product.

Let $p_{\pm }:B\rightarrow S_{\pm }^{n}$ be defined by $p_{\pm }\left(
x\right) =\gamma _{V\left( x\right) }\left( \pm \infty \right) $, which is a 
diffeomorphism. We consider the following diagram%
\begin{equation*}
B\times B\overset{P}{\longrightarrow }S_{-}^{n}\times S_{+}^{n}\overset{F}{%
\longleftarrow }\mathcal{L}^{\prime }\text{,}
\end{equation*}%
where $F$ is defined in (\ref{isoF}) and $P:B\times B\rightarrow
S_{-}^{n}\times S_{+}^{n}$ is given by $P\left( x,y\right) =\left(
p_{-}\left( x\right) ,p_{+}\left( y\right) \right) $. Since both $F$ and $P\ 
$are diffeomorphisms, the map $P^{-1}\circ F$ provides an identification of $%
\mathcal{L}^{\prime }$ with $B\times B$. We have that $P^{-1}F\left( 
\mathcal{M}_{o}\right) $ is the graph of the identity on $B$, that is $%
\left\{ \left( x,x\right) \mid x\in B\right\} $.

Let us see that $P^{-1}\left( F\left( \mathcal{M}\right) \right) $ is the
graph of a certain diffeomorphism $f$ of $B$ with the property that $f$
coincides with the identity on the complement of $B_{o}$. Let $q_{\pm
}:B\rightarrow S_{\pm }^{n}$ be defined by 
\begin{equation*}
q_{\pm }\left( x\right) =\gamma _{W\left( x\right) }\left( \pm \infty
\right) ,
\end{equation*}%
which, by Proposition \ref{rltdifeo}, is a \emph{local} diffeomorphism
coinciding with $p_{\pm }$ on the complement of $B_{o}$. Then the map $%
f_{\pm }:=\left( p_{\pm }\right) ^{-1}\circ q_{\pm }:B\rightarrow B$ is a
local homeomorphism that coincides with the identity on the complement of $%
B_{o}$. By Lemma \ref{lemaBola}, $f_{\pm }$ is a homeomorphism.

Since $F\left( \mathcal{M}\right) =\left\{ \left( q_{-}\left( x\right)
,q_{+}\left( x\right) \right) \mid x\in B\right\} $, then 
$$
P^{-1}\left(F\left( \mathcal{M}\right) \right) =\left\{ \left( x,f\left( x\right)
\right) \mid x\in B\right\}, 
$$ 
where $f$ is the homeomorphism of $B$ given by $f=f_{+}\circ \left( f_{-}\right) ^{-1}$. 
Now the subsets  $\left\{ \left( x,x\right) \mid x\in B_{o}\right\}$ and $\left\{ \left( x,f\left(
x\right) \right) \mid x\in B_{o}\right\} $ are homologous in $B\times B$ and the proof
concludes.
\end{proof}

\bibliographystyle{amsplain}
\bibliography{mybib}

\providecommand{\bysame}{\leavevmode\hbox to3em{\hrulefill}\thinspace}
\providecommand{\MR}{\relax\ifhmode\unskip\space\fi MR }
\providecommand{\MRhref}[2]{%
  \href{http://www.ams.org/mathscinet-getitem?mr=#1}{#2}
}
\providecommand{\href}[2]{#2}
\begin{thebibliography}{10}

\bibitem{dadok}
J.~Dadok and R.~Harvey, \emph{Calibrations on $\mathbf{R}^{6}$}, Duke Math. J.
  \textbf{50} (1983), 1231--1243.

\bibitem{Georgiou-2010}
N.~Georgiou and B.~Guilfoyle, \emph{On the space of oriented geodesics of
  hyperbolic 3-space}, Rocky Mountain J. Math. \textbf{40} (2010), 1183--1219.

\bibitem{gluck-morgan-mackenzie}
D.~Gluck, H.~Mackenzie and F.~Morgan, \emph{Volume-minimizing cycles in
  {G}rassmann manifolds}, Duke Math. J. \textbf{79} (1995), 335--404.

\bibitem{Gluck-Ziller}
H.~Gluck and W.~Ziller, \emph{On the volume of a unit vector field on the three
  sphere}, Comm. Math. Helv. \textbf{61} (1986), 177--192.

\bibitem{Godoy-Salvai14}
Y.~Godoy and M.~Salvai, \emph{Global smooth geodesic foliations of the
  hyperbolic space}, arXiv:1411.5701 (2014).

\bibitem{harvey-lawson}
R.~Harvey and H.~Lawson, \emph{Calibrated geometries}, Acta Math. \textbf{148}
  (1982), 47--157.

\bibitem{HarveyA}
\bysame, \emph{Split special {L}agrangian geometry}, Metric and Differential
  Geometry: The {J}eff {C}heeger {A}nniversary Volume (X.~Dai and X.~Rong,
  eds.), Springer, 2012, pp.~43--89.

\bibitem{warren-ot}
R.~Kim, Y.~McCann and M.~Warren, \emph{Pseudo-{R}iemannian geometry calibrates
  optimal transportation}, Math. Res. Lett. \textbf{17} (2010), 1183--1197.

\bibitem{Mealy}
J.~Mealy, \emph{Volume maximization in semi-{R}iemannian manifolds}, Indiana
  Univ. Math. J. \textbf{40} (1991), 793--814.

\bibitem{morgan}
F.~Morgan, \emph{On the singular structure of three-dimensional,
  area-minimizing surfaces in $\mathbf{R}^n$}, Trans. Am. Math. Soc.
  \textbf{276} (1983), 137--143.

\bibitem{morgan-ziller}
F.~Morgan and W.~Ziller, \emph{Calibrated geometries in {G}rassmann manifolds},
  Comment. Math. Helv. \textbf{64} (1989), 256--268.

\bibitem{salvai01}
M.~Salvai, \emph{A two point calibration on an \emph{{S}p(1)} bundle over the
  three-sphere}, J. Differential Geom. \textbf{59} (2001), 523--533.

\bibitem{Salvai-2007}
\bysame, \emph{On the geometry of the space of oriented lines of the hyperbolic
  space}, Glasgow Math. J. \textbf{49} (2007), 357--366.

\bibitem{Warren}
M.~Warren, \emph{Calibrations associated to {M}onge-{A}mp\`{e}re equations},
  Trans. Am. Math. Soc. \textbf{362} (2010), 3947--3962.

\end{thebibliography}

\end{document}